\newtheorem{theorem}{Theorem}[section]
\newtheorem{lemma}[theorem]{Lemma}
\newtheorem{corollary}[theorem]{Corollary}
\newcommand{\cal}{\mathcal}
\newcommand{\brac}[1]{\left(#1\right)}
\newcommand{\bfrac}[2]{\left(\frac{#1}{#2}\right)}
\def\cE{{\cal E}}
\newcommand{\beq}[2]{\begin{equation}\label{#1}#2\end{equation}}
\newcommand{\set}[1]{\left\{#1\right\}}
\def\E{\mbox{{\bf E}}}
\def\Pr{\mbox{{\bf Pr}}}
\newcommand{\ignore}[1]{}
\def\dist{\;\text{dist}}
 \def\b{\beta}  
\def\e{\varepsilon}    
 \def\th{\theta}    
\def\r{\rho}   
 \def\om{\omega}
\def\E{\mbox{{\bf E}}}
\def\Pr{\mbox{{\bf Pr}}}
\def\dist{\;\text{dist}}
\def\la{\zeta}
\newcommand{\diam}{\textrm{diam}}
\def\deg{\text{deg}}
\def\codeg{\text{codeg}}
\begin{document}

\title{A note on the localization number of random graphs: diameter two case}

\author{Andrzej Dudek}
\address{Department of Mathematics, Western Michigan University, Kalamazoo, MI}
\email{\tt andrzej.dudek@wmich.edu}
\thanks{The first author was supported in part by a grant from the Simons Foundation (522400, AD)}

\author{Alan Frieze}
\address{Department of Mathematical Sciences, Carnegie Mellon University, Pittsburgh, PA}
\email{\tt alan@random.math.cmu.edu}
\thanks{The second author was supported in part by NSF grant DMS1661063}

\author{Wesley Pegden}
\address{Department of Mathematical Sciences, Carnegie Mellon University, Pittsburgh, PA}
\email{\tt wes@math.cmu.edu}
\thanks{The third author was supported in part by NSF grant DMS136313}


\begin{abstract}
We study the localization game on dense random graphs. In this game, a {\em cop} $x$ tries to locate a {\em robber} $y$ by asking for the graph distance of $y$ from every vertex in a sequence of sets $W_1,W_2,\ldots,W_\ell$. We prove high probability upper and lower bounds for the minimum size of each $W_i$ that will guarantee that $x$ will be able to locate $y$. 
\end{abstract}

\maketitle

\section{Introduction}
In this paper we consider the following {\em Localization Game} related to the well studied {\em Cops and Robbers} game; see Bonato and Nowakowski \cite{BN} for a survey on this game. A robber is located at a vertex $v$ of a graph $G$. In each round, a cop can ask for the graph distance between $v$ and vertices $W=\set{w_1,w_2,\ldots,w_k}$, where a new set of vertices $W$ can be chosen at the start of each round. The cop wins immediately if {\em the $W$-signature} of $v$, i.e. the set of distances,  $\dist(v,w_i)$, $i=1,2,\ldots,k$ is sufficient to determine $v$. Otherwise, the robber will move to a neighbor of $v$ and the cop will try again with a (possibly) different {\em test set}~$W$. Given $G$, the {\em localization number} $\la(G)$ is the minimum $k$ so that the cop can eventually locate the robber, that means, the cop determines the exact location of the robber from the test sets of size~$k$. This game was introduced by Bosek et al. \cite{Bosek}, who studied the localization game on geometric and planar graphs, and also independently, by Haslegrave et al.~\cite{HJK}. For some other related results see~\cite{West, S, S2}.

\section{Results}
The localization number is closely related to the {\em metric dimension} $\b(G)$. This is the smallest integer $k$ such that the cop can always win the game in {\em one} round. Clearly, $\la(G)\leq \b(G)$.

In this note we will study the localization number of the random graph $G_{n,p}$ with diameter two. Here and throughout the whole paper $\omega = \omega(n)=o(\log n)$ denotes a function tending arbitrarily slowly to infinity with~$n$. We will also use the notation
$$q=1-p\text{ and }\r=p^2+q^2.$$
We write $A_n \lesssim B_n$ to mean that $A_n \le (1+o(1))B_n$ as $n$ tends to infinity. We further write $A_n\approx B_n$ if $A_n =(1+o(1))B_n$ as $n$ tends to infinity. Finally, we say that an event $\cE_n$ occurs {\em asymptotically almost surely}, or a.a.s. for brevity, if $\lim_{n\rightarrow\infty}\Pr(\cE_n)=1$.

The metric dimension of $G_{n,p}$ was studied by Bollob\'as et al.~\cite{BMP}. If we specialize their result to large $p$ then it can be expressed as:
\begin{theorem}[\cite{BMP}]\label{thm:metric}
Suppose that
\[
\bfrac{2\log n+\om}{n}^{1/2}\leq p \le 1 - \frac{3 \log \log n}{\log n}.
\]
Then, 
\beq{BMP0}{
\frac{2\log np}{\log 1/\r}\lesssim \b(G_{n,p}) \lesssim \frac{2\log n}{\log 1/\r}\ a.a.s..
}
\end{theorem}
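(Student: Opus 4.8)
In the stated range of $p$, $G_{n,p}$ has diameter two a.a.s., so for any test set $W$ and any $v\notin W$ we have $\dist(v,w)\in\set{1,2}$ for all $w\in W$, determined solely by whether $v\sim w$. Thus the $W$-signature of a vertex $v\notin W$ is exactly its adjacency vector on $W$, an element of $\set{0,1}^{\card W}$, while each $w\in W$ is trivially located by the entry $\dist(w,w)=0$. Hence $W$ resolves $G$ if and only if the $n-\card W$ adjacency vectors of the vertices outside $W$ are pairwise distinct, and $\b(G)$ is the least $\card W$ admitting such a $W$. For fixed $u,v\notin W$ and $w\in W$ one has $\Pr(u\sim w\iff v\sim w)=p^2+q^2=\r$, and since the edges involved are independent, two signatures coincide with probability exactly $\r^{\card W}$.

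For the upper bound I would use a first moment over pairs. Fix any $W$ with $\card W=k=\bfrac{2\log n+\om}{\log 1/\r}$; then the expected number of pairs $u,v\notin W$ with equal signature is at most $\binom n2\r^{k}\le\tfrac12 e^{-\om}=o(1)$. By Markov's inequality this $W$ a.a.s.\ resolves $G$, giving $\b(G_{n,p})\lesssim 2\log n/\log(1/\r)$.

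For the lower bound I would bound the first moment of the number of resolving sets. Since a resolving $k$-set exists precisely when $\b\le k$ (one may pad any smaller resolving set up to size $k$ without losing resolution),
\beq{fm}{
\Pr\brac{\b(G_{n,p})\le k}\le \binom nk\, P,\qquad P:=\Pr\brac{W_0\text{ resolves } G_{n,p}},
}
for a fixed $k$-set $W_0$, and $P=\Pr(\text{the }n-k\text{ adjacency vectors off }W_0\text{ are distinct})$. The plan is to take the largest $k$ for which $\binom nk P\to 0$. Two mechanisms drive collisions: the bulk birthday count $\binom n2\r^{k}$, which dominates when $p$ is bounded away from $0$, and the most popular class --- the vertices with no neighbour in $W_0$ (when $p\le\tfrac12$), each of probability $q^{k}$ --- which a resolving $W_0$ must meet in at most one vertex, forcing $(n-k)q^{k}\lesssim \log\binom nk$ and dominating when $p$ is small so that $q^{k}$ is large. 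Balancing the governing mechanism against $\log\binom nk=\TH(k\log n)$ --- using $\log(1/q)=\ooi\tfrac12\log(1/\r)$ and a second-order correction of size $\log(1/p)$ in the small-$p$ regime --- the threshold works out to $k=(1-o(1))\,2\log(np)/\log(1/\r)$ in both cases; hence $\b(G_{n,p})\gtrsim 2\log(np)/\log(1/\r)$.

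The main obstacle is the estimate of $P$, which is delicate because the product-Bernoulli signature measure on $\set{0,1}^{k}$ is highly non-uniform. In particular the birthday approximation $P\approx\exp(-\binom n2\r^{k})$ is \emph{not} a valid upper bound once the few-neighbour classes become heavy, so an honest occupancy (Poissonization) argument is required; it is precisely the resulting second-order term of size $\log(1/p)$ that converts the $2\log n$ of the upper bound into the $2\log(np)$ of the lower bound. This same non-uniformity is the reason I would not expect the two bounds to meet: the truthful exponent of $P$ lies strictly between the two estimates throughout the polynomially small range of $p$.
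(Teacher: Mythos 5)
The paper never proves Theorem \ref{thm:metric}: it is quoted from Bollob\'as, Mitsche and Pra\l{}at \cite{BMP} (``if we specialize their result\ldots''), so there is no internal proof to compare yours against, and I judge your argument on its own merits. Your upper bound is correct and standard: on the a.a.s.\ event $\diam(G_{n,p})\le 2$ the $W$-signature of $v\notin W$ is its adjacency vector, vertices of $W$ are distinguished by their zero entry, and for a deterministic set of size $k=\lceil (2\log n+\om)/\log (1/\r)\rceil$ the expected number of colliding pairs outside $W$ is at most $\binom{n}{2}\r^{k}\le \frac12 e^{-\om}=o(1)$; since $\om=o(\log n)$ this $k$ is $(1+o(1))\,2\log n/\log(1/\r)$.

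The lower bound, however, has a genuine gap. The reduction $\Pr(\b\le k)\le \binom{n}{k}P$ is fine (supersets of resolving sets resolve), and your ``most popular class'' mechanism is rigorous and does give the threshold $2\log (np)/\log(1/\r)$ --- but only when $\min(p,q)=o(1)$, because only then is the heaviest atom actually heavy. For $p$ bounded away from $0$ and $1$ (say $p=1/2$, exactly the case behind the ``almost all graphs'' corollary), at the relevant $k=2(1-\g)\log n/\log(1/\r)$ with $\g=o(1)$ one has $nq^{k}=n^{2\g-1+o(1)}=o(1)$: no class is expected to contain even two vertices, mechanism (b) says nothing, and everything rests on the bulk estimate $P\le\exp\{-\Omega(n^{2}\r^{k})\}$, which you explicitly leave unproved (``an honest occupancy argument is required'') and even cast doubt on. The doubt is misplaced precisely where you need the estimate: when every atom is light the birthday bound is valid, and it is provable with the very tools this paper uses for its own localization lower bound, namely Suen's inequality (Theorem \ref{ineq:suen}) applied to the pair-collision indicators, with Lemma \ref{claim:32} taming the triple term. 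Concretely, $\mu=\binom{n-k}{2}\r^{k}\approx\frac12 n^{2\g}$, $\Delta\le 3\binom{n}{3}(p^{3}+q^{3})^{k}\le \frac12 n^{3}\r^{3k/2}=\frac12 n^{3\g}$, and Suen's $\delta\le 2n\r^{k}$, so $P\le\exp\{-\Omega(\min(n^{\g},n))\}$; since $\log\binom{n}{k}\le k\log n\le 2\log^{2}n/\min(p,q)$, the choice $\g=\big(\log(1/\min(p,q))+3\log\log n\big)/\log n$ makes $\binom{n}{k}P\to 0$, and because $\log(np)\ge(\tfrac12-o(1))\log n$ this single computation yields $\b(G_{n,p})\gtrsim 2\log(np)/\log(1/\r)$ for the \emph{entire} admissible range of $p$ at once --- no case split, no Poissonization. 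So your skeleton and threshold arithmetic are right, but as written the proposal establishes the lower bound only for $\min(p,q)=o(1)$; the missing step for constant $p$ is exactly a Suen/Janson-type correlation inequality, and your closing speculation about the non-uniformity of the signature measure, while harmless, is what led you away from it.
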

Note that the upper and lower bounds in \eqref{BMP0} are asymptotically equal if $p\geq n^{-o(1)}$.

It is well-known (see, e.g.,~\cite{FK}) that if $np^2\geq 2\log n +\om$, then a.a.s.\ $\diam(G_{n,p})\leq 2$. We will condition on the diameter satisfying this. 
Graphs with diameter~2 enable some simplifications. Indeed, if a vertex $v$ has $W$-signature $\{d_1,\dots,d_k\}$, where $W=\{w_1,\dots,w_k\}$, where $d_i = \dist(v,w_i)$, then 
\[
d_i = 
\begin{cases}
1 & \text{iff } \{v,w_i\} \in E\\
2 & \text{iff } \{v,w_i\} \notin E.
\end{cases}
\]
Consequently, the probability that two vertices $u$ and $v$ in $G_{n,p}$ have the same $W$-signature, $W = \{w_1,\dots,w_k\}$, such that $u,v\notin W$ is equal to
\[
\prod_{i=1}^k \Pr(u,v\in N(w_i) \text{\ \,or\ \,} u,v\notin N(w_i)) = \r^k.
\]

The upper bound on $p$ in the below theorem is determined by a result of~\cite{BMP} about the metric dimension of~$G_{n,p}$.

\begin{theorem}\label{thm:diam2}
Let 
\[
\bfrac{2\log n + \omega}{n}^{1/2} \le p \le 1 - \frac{3\log \log n}{\log n} \quad \text{ and } \quad \eta=\frac{\log(1/p)}{\log n}
\]
and let $c$ be a positive constant such that
\[
0< c < \min\left\{ \frac{1}{2}\left( \frac{\log n - 3\log\log n}{\log1/p}  - 1\right), 1 \right\}.
\]
Then, a.a.s.
$$\brac{1 - 2\eta-\frac{4\log\log n}{\log n}} \frac{2\log n}{\log 1/\r}\le \zeta(G_{n,p})\le 
(1 - c\eta) \frac{2\log n}{\log 1/\r}.$$
\end{theorem}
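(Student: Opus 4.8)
The plan is to treat the two bounds by independent cop/robber strategies, in each case reducing the game to one or two a.a.s.\ structural facts about $G_{n,p}$. Write $B=\frac{2\log n}{\log(1/\r)}$ for the asymptotic metric dimension and recall that, in the diameter-two regime, a test set $W$ of size $k$ fails to separate a fixed pair of vertices with probability exactly $\r^{k}$. I would first record the elementary inequality $\max\set{p,q}^{2}\le p^{2}+q^{2}=\r$, so that $\max\set{p,q}^{k}\le \r^{k/2}=n^{-(1-c\eta)}$ when $k=(1-c\eta)B$; this is the identity that turns signature-class sizes into powers of $n$. I would also use, throughout, the a.a.s.\ bounds $\deg(v)=(1+o(1))np=n^{1-\eta+o(1)}$ for every $v$ and $\codeg(u,v)\gtrsim np^{2}/\mathrm{polylog}\,n=n^{1-2\eta+o(1)}$ for every pair.

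\textbf{Upper bound.} Here the cop pre-commits pairwise disjoint test sets $W_{1},\dots,W_{T}$ of size $k=(1-c\eta)B$, with $T=O\!\brac{1/(1-c)}$, and I would track the set $P_{t}$ of positions consistent with the robber's answers after round~$t$, which evolves as $P_{t}=N(P_{t-1})\cap A_{\sigma_{t}}$, where $A_{\sigma}$ is the $W_{t}$-signature class the robber reveals. Two estimates drive the argument: after the robber moves, $\card{N(P_{t-1})}\le\card{P_{t-1}}\cdot n^{1-\eta+o(1)}$ by the degree bound; and the robber's class meets the reachable set in at most $\card{N(P_{t-1})}\cdot\max\set{p,q}^{k}\cdot n^{o(1)}=\card{N(P_{t-1})}\cdot n^{-(1-c\eta)+o(1)}$ vertices. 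Multiplying gives the recursion $\card{P_{t}}\le\card{P_{t-1}}\cdot n^{-(1-c)\eta+o(1)}$, so since $c<1$ the candidate set shrinks geometrically in the exponent; as $\card{P_{1}}\le n^{c\eta+o(1)}$, after $O(1/(1-c))$ rounds $P_{t}$ has subpolynomial size and a final query of size $k$ (which resolves any polylogarithmic set) finishes. The more stringent upper bound on $c$ in the hypothesis keeps $k$ in the range where the diameter-two and metric-dimension estimates of Theorem~\ref{thm:metric} apply. The delicate point is the signature-splitting estimate: since $N(P_{t-1})$ is determined adaptively by the graph and the earlier answers I cannot fix it before computing probabilities, and I would instead prove the bound uniformly over all admissibly small seed sets $S$, exploiting that membership in $A_{\sigma}$ depends only on edges to $W_{t}$ while $N(S)$ depends only on the remaining edges, so the two are independent once the $W_{t}$ are disjoint.

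\textbf{Lower bound.} For the matching lower bound I would give a robber that evades forever whenever $k\le\brac{1-2\eta-\frac{4\log\log n}{\log n}}B$. The robber maintains the invariant that the cop's information is consistent with two distinct current positions $a,b$ whose common neighbourhood $C=N(a)\cap N(b)$ has $\card{C}\ge m$, where $m=n^{1-2\eta}/\mathrm{polylog}\,n$ is a safe lower bound on every codegree. In a round the cop commits $W$; every vertex of $C$ is reachable from both $a$ and $b$, and because $\card{C}\ge m$ while $k$ is below the pair-resolution threshold $\frac{2\log m}{\log(1/\r)}$ for a set of size $m$, the set $C$ is not resolved by $W$, so two vertices $a',b'\in C$ share their $W$-signature. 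Moving the true token to $a'$ and retaining $b'$ as phantom keeps two consistent positions; choosing $a',b'$ (via the codegree bound) to again have $\ge m$ common neighbours re-establishes the invariant, and the robber is never located. The threshold computation is exactly $m^{2}\r^{k}\gtrsim 1$, i.e.\ $k\lesssim\frac{2\log m}{\log(1/\r)}=(1-2\eta)B$, which is the origin of the leading $2\eta$ deficit; the $\frac{4\log\log n}{\log n}$ term absorbs the weak concentration of $\codeg$ near the threshold $np^{2}\ge 2\log n+\om$.

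\textbf{Main obstacle.} I expect the crux to be making the non-resolvability of $C$ robust against an adaptive, omniscient cop. The quantity the robber must keep unresolved, $C$, is determined adaptively, and the cop chooses $W$ afterwards; the naive remedy---showing a.a.s.\ that \emph{no} $k$-subset of $V$ resolves \emph{any} of the $\le\binom{n}{2}$ candidate common neighbourhoods---is too lossy, since the union factor $\binom{n}{k}=n^{k}$ with $k=n^{\eta+o(1)}$ would cost an additional $\Theta(\eta)$ in the exponent and degrade the bound to roughly $(1-\tfrac52\eta)B$. The argument must instead use that the robber can anticipate the cop's (strategy-determined) query and pre-position $a,b$ accordingly, reducing matters to the single-query first-moment estimate $m^{2}\r^{k}\gtrsim1$ and thereby recovering the sharp constant $2\eta$. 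Controlling this interplay between the robber's look-ahead and the cop's adaptivity, together with the codegree concentration that fixes $m$ and the correction terms, is the step I would expect to require the most care.
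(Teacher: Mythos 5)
Your lower bound is where the proposal genuinely breaks. You correctly identify the crux---the cop is omniscient and adaptive, so its query is an arbitrary graph-dependent $k$-set---but you resolve it in the wrong direction. Robber ``look-ahead'' cannot reduce matters to the single-query estimate $m^2\r^k\gtrsim 1$: that estimate is a probability over the graph for a query fixed \emph{independently} of the graph, whereas the query the cop actually asks is a function of the entire (known) graph; for any fixed pair the cop can simply put into $W$ a vertex adjacent to exactly one of the two and separate them. Knowing the query one move in advance does not change this. A valid proof must exhibit the evasion structure simultaneously for \emph{every} $k$-set, i.e.\ exactly the union bound you reject; moreover, even for one fixed query, a first moment $\gtrsim 1$ proves nothing without concentration. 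The paper does both: by Suen's inequality (Theorem~\ref{ineq:suen}), the number $X_{u,S}$ of same-signature pairs among the \emph{neighbors of a single vertex} $u$ vanishes with probability at most $\exp\set{-pn^{\e}/64}$, and this exponent $n^{\e-\eta}/64$ is calibrated (via $\e=2\eta+\frac{4\log\log n}{\log n}$) to beat the union-bound cost $(k+1)\log n\approx n^{\eta}\log^2 n$ over all $n\binom{n}{k}$ pairs $(u,S)$. The reason the union bound looks unaffordable to you is your invariant: you track two consistent positions $a,b$ and work inside $N(a)\cap N(b)$, of size $\approx n^{1-2\eta}$, so your first moment alone already costs $2\eta$ and anything on top overshoots. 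The paper's invariant is cheaper: the robber occupies one vertex $u$, and the ambiguity is between two \emph{neighbors} of $u$ agreeing on the upcoming query (both reachable from $u$, hence both consistent); neighbor-pairs cost only $\eta$, the union bound over queries costs the second $\eta$ plus the $\frac{4\log\log n}{\log n}$ term, and this lands exactly on the theorem's lower bound (Lemma~\ref{lem:diam2:low}).

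Your upper bound follows the paper's scheme---the identical recursion $\card{P_t}\le\card{P_{t-1}}\,n^{-(1-c)\eta+o(1)}$ over $O(1/(1-c))$ rounds---but the implementation does not close. With deterministic disjoint test sets you are forced to use graph-edge randomness for the signature-splitting step, and your two fixes fail: the independence claim is false as stated (computing $N(S)$ exposes the edges from $S$ to every later $W_t$, and after a round the robber may re-enter exposed territory), and a union bound ``over all admissibly small seed sets'' is quantitatively fatal---beating $\binom{n}{s}$ seed sets (or the $2^k$ possible signature classes) requires per-event failure probabilities so small that Chernoff forces additive error terms of order $s\log n$ (resp.\ of order $k\approx n^{\eta}\log n$) in each class-size bound, and since $c<1$ these exceed the multiplicative target $n^{c\eta}$, so the recursion stalls near size $k$ rather than reaching a single vertex. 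The same problem voids your final step: the surviving small set is graph-dependent, and the whole graph a.a.s.\ contains $\approx n^{2c\eta}\gg 1$ unresolved pairs, so no union bound over small sets is available. The paper's device removes all of this at once: each $S_i$ is a uniformly random $k$-set chosen independently of the graph, so, conditioned on the a.a.s.\ degree and codegree bounds \eqref{degrees}--\eqref{co-degrees} (which hold for all pairs simultaneously), the bound $\E(X_{i+1}\mid\text{history})\le\card{T_i}^2\r^k(1+o(1))$ is an exact computation over fresh test-set randomness; Markov's inequality per round, and termination when $\E(X_\ell)=o(1)$ (so $X_\ell=0$ a.a.s.), finish with no union bounds and no exposure bookkeeping. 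Note finally that this iterative argument (yours or the paper's Lemma~\ref{lem:ub}\eqref{lem:ub:ii}) only works when $\log(1/p)=\Omega(\log n)$: when $\eta=o(1)$ the per-round gain $n^{-(1-c)\eta}$ is swamped by the slack factors, which is why the paper splits off that regime and quotes Theorem~\ref{thm:metric} there (Lemma~\ref{lem:ub}\eqref{lem:ub:i}); your proposal does not make this case distinction.
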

\subsection{Observations about Theorem \ref{thm:diam2}}\ \\
First observe that if $p\ge \frac{\log n}{n^{1/3}}$, then 
\[
\frac{1}{2}\left( \frac{\log n - 3\log\log n}{\log1/p}  - 1\right) \ge 1
\]
and so $c$ can be any positive constant less than 1. Furthermore, for any $p \ge \bfrac{2\log n+\omega}{n}^{1/2}$ we have
\[
\frac{1}{2}\left( \frac{\log n - 3\log\log n}{\log1/p}  - 1\right) \ge
\frac{1}{2}\left( \frac{\log n - 3\log\log n}{\frac{1}{2}(\log n - \log (2\log n + \omega))}  - 1\right) =\frac{1}{2}-o(1).
\]
Hence, we can always take $c\geq \frac{1}{2}-o(1)$.

\vspace{.2in}
If $p=1/n^{\alpha}$ for some constant $0 < \alpha <1/2$, then, 
\[
\eta = \alpha \quad \text{ and } \quad c \leq
\begin{cases}
1-o(1) & \text{ if } 0<\alpha < \frac{1}{3}\\
\frac{1}{2\alpha} -\frac{1}{2}-o(1) & \text{ otherwise.}
\end{cases}
\]
Moreover,
\[
\r = 1-2p+2p^2 \text{ and so }\log1/\r = 2p +O(p^2)\approx \frac{2}{n^{\alpha}}.
\]
Hence, Theorem~\ref{thm:diam2} implies the following corollary.
\begin{corollary}
Let $p = 1/n^{\alpha}$, where $0 < \alpha <1/2$ is constant. Then, a.a.s.\
$$
(1 - 2\alpha) n^{\alpha} \log n\lesssim \zeta(G_{n,p}) \lesssim 
\begin{cases}
(1 - \alpha) n^{\alpha} \log n & \text{ if } 0<\alpha < \frac{1}{3}\\
\left(\frac{1+\alpha}{2}\right) n^{\alpha} \log n & \text{ otherwise}.
\end{cases}
$$
\end{corollary}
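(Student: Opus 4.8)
The plan is to obtain the corollary by direct substitution of $p=n^{-\alpha}$ into Theorem~\ref{thm:diam2}, consolidating the error terms using the observations recorded above. First I would check that $p=n^{-\alpha}$ satisfies the hypotheses for all large $n$: the lower constraint $p\ge\brac{\frac{2\log n+\omega}{n}}^{1/2}=n^{-1/2+o(1)}$ holds because $\alpha<1/2$ is a fixed constant, while the upper constraint $p\le 1-\frac{3\log\log n}{\log n}$ is immediate since $p\to 0$. With the hypotheses in force I would record the three quantities feeding the bounds: $\eta=\frac{\log(1/p)}{\log n}=\alpha$; the expansion $\rho=p^2+(1-p)^2=1-2p+2p^2$, whence $\log(1/\rho)=2p+O(p^2)=2n^{-\alpha}(1+o(1))$; and consequently $\frac{2\log n}{\log(1/\rho)}=n^{\alpha}\log n\,(1+o(1))$.

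For the lower bound, substituting $\eta=\alpha$ gives
\[
\brac{1-2\alpha-\tfrac{4\log\log n}{\log n}}\frac{2\log n}{\log(1/\rho)}=(1-2\alpha)(1+o(1))\,n^{\alpha}\log n,
\]
since the positive constant $1-2\alpha$ absorbs both the $\frac{4\log\log n}{\log n}$ correction and the $(1+o(1))$ from $\frac{2\log n}{\log(1/\rho)}$; this yields $(1-2\alpha)n^{\alpha}\log n\lesssim\zeta(G_{n,p})$.

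For the upper bound $(1-c\eta)\frac{2\log n}{\log(1/\rho)}=(1-c\alpha)(1+o(1))\,n^{\alpha}\log n$, I would choose $c$ as large as the constraint $0<c<\min\{\tfrac12(\tfrac{\log n-3\log\log n}{\log(1/p)}-1),1\}$ permits. Since $\log(1/p)=\alpha\log n$, this upper limit equals $\min\{\tfrac12(\tfrac1\alpha-1-o(1)),1\}$. When $0<\alpha<\tfrac13$ we have $\tfrac12(\tfrac1\alpha-1)>1$, so the binding constraint is $c<1$ and we may take $c=1-o(1)$, giving $1-c\alpha=(1-\alpha)(1+o(1))$ and hence $\zeta\lesssim(1-\alpha)n^{\alpha}\log n$. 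When $\tfrac13\le\alpha<\tfrac12$ the first entry of the $\min$ is binding, so we may take $c=\tfrac{1}{2\alpha}-\tfrac12-o(1)$, giving $1-c\alpha=\tfrac{1+\alpha}{2}+o(1)=\tfrac{1+\alpha}{2}(1+o(1))$ and hence $\zeta\lesssim\tfrac{1+\alpha}{2}n^{\alpha}\log n$.

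The only genuinely delicate point is the $\alpha<\tfrac13$ case of the upper bound, where the target constant $1-\alpha$ is the infimum over admissible $c$ rather than the value at one fixed $c$. Here the first entry of the $\min$ exceeds $1$ for all large $n$, so the sole constraint is $c<1$; following the way the observations above treat $c$ as an $n$-dependent quantity, I would let $c=c(n)\to 1^-$ slowly, so that $1-c(n)\alpha\to 1-\alpha$ and the discrepancy is absorbed into the $(1+o(1))$ implicit in $\lesssim$. Everything else is routine consolidation of $o(1)$ terms.
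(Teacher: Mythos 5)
Your proposal is correct and follows essentially the same route as the paper: the corollary is obtained by direct substitution of $p=n^{-\alpha}$ into Theorem~\ref{thm:diam2}, computing $\eta=\alpha$ and $\log(1/\rho)=2p+O(p^2)\approx 2n^{-\alpha}$, and splitting at $\alpha=\frac13$ according to which entry of the $\min$ constraining $c$ is active, exactly as in the paper's observations preceding the corollary. Your explicit handling of the one delicate point --- that the constants $1-\alpha$ and $\frac{1+\alpha}{2}$ arise as infima over admissible $c$, resolved by letting $c$ approach the supremum (equivalently, a fixed-$\epsilon$ choice of constant $c$ followed by diagonalization, absorbed into $\lesssim$) --- is the same licence the paper itself takes when it writes $c\le 1-o(1)$ and $c\le\frac{1}{2\alpha}-\frac12-o(1)$.
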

\noindent
Notice that for $0<\alpha < \frac{1}{3}$ the upper bound on $\zeta(G_{n,p})$ equals the lower bound from Theorem~\ref{thm:metric}. Therefore, it is plausible to conjecture that $\zeta(G_{n,p}) < \beta(G_{n,p})$.

Now observe that if $p = n^{-1/\omega}$, then
\[
2\eta = \frac{2\log(1/p)}{\log n} = \frac{2}{\omega} = o(1).
\]
Thus, Theorem~\ref{thm:diam2} implies:
\begin{corollary}
Let $p = n^{-1/\omega}$. Then,
\[
\zeta(G_{n,p}) \approx \frac{2\log n}{\log1/\r}.
\]
\end{corollary}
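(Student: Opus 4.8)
The plan is to obtain this corollary directly from Theorem~\ref{thm:diam2} by substituting $p = n^{-1/\omega}$ and checking that both the lower and the upper bound prefactors converge to~$1$. Since the claim has the form $A_n \approx B_n$ with $B_n = \frac{2\log n}{\log 1/\r}$, it suffices to confirm that the theorem applies for this $p$ and that the two bracketed factors appearing in its statement are each $1-o(1)$.

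First I would verify the hypotheses. With $p = n^{-1/\omega}$ we have $\log(1/p) = \frac{\log n}{\omega}$, so $\eta = \frac{\log(1/p)}{\log n} = \frac{1}{\omega} \to 0$ because $\omega \to \infty$. Since $\omega = o(\log n)$, we also get $\log(1/p) = \frac{\log n}{\omega} \to \infty$, hence $p \to 0$, which satisfies the upper constraint $p \le 1 - \frac{3\log\log n}{\log n}$ for all large~$n$. For the lower constraint, the ratio
\[
p \Big/ \bfrac{2\log n + \omega}{n}^{1/2} = \frac{n^{1/2-1/\omega}}{(2\log n + \omega)^{1/2}} \to \infty,
\]
so $p \ge \bfrac{2\log n + \omega}{n}^{1/2}$ for all large~$n$. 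It remains to exhibit an admissible constant~$c$. Computing the first term inside the minimum,
\[
\frac{1}{2}\left(\frac{\log n - 3\log\log n}{\log 1/p} - 1\right) = \frac{\omega}{2}\left(1 - \frac{3\log\log n}{\log n}\right) - \frac{1}{2} \to \infty,
\]
so this minimum equals~$1$ for all large~$n$, and we may fix any constant $c \in (0,1)$, say $c = 1/2$. (Alternatively, one may simply note that $p = n^{-1/\omega} \ge \frac{\log n}{n^{1/3}}$ for large~$n$ and invoke the observation preceding the corollary.)

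With these choices Theorem~\ref{thm:diam2} gives, a.a.s.,
\[
\left(1 - \frac{2}{\omega} - \frac{4\log\log n}{\log n}\right)\frac{2\log n}{\log 1/\r} \le \zeta(G_{n,p}) \le \left(1 - \frac{c}{\omega}\right)\frac{2\log n}{\log 1/\r}.
\]
Since $\frac{2}{\omega} = o(1)$, $\frac{4\log\log n}{\log n} = o(1)$, and $\frac{c}{\omega} = o(1)$, both prefactors are $1 - o(1)$, which yields $\zeta(G_{n,p}) \approx \frac{2\log n}{\log 1/\r}$. There is essentially no analytic obstacle: all the content lives in Theorem~\ref{thm:diam2}, and the only point demanding any care is confirming that the admissible range for the constant~$c$ is nonempty for large~$n$, which is exactly what the divergence of the first term in the minimum guarantees.
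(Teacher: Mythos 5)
Your proof is correct and takes essentially the same route as the paper: the paper's own derivation of this corollary consists of observing that $2\eta = 2/\omega = o(1)$ and then invoking Theorem~\ref{thm:diam2}, exactly as you do. Your extra bookkeeping (verifying the admissible range of $p$ and that the minimum defining the constraint on $c$ equals $1$ for large $n$) is a more careful spelling-out of what the paper leaves implicit, not a different argument.
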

\noindent
Clearly, this also holds for any constant~$p$. In particular, for $p=1/2$, we get:
\begin{corollary}
For almost all graphs~$G$ we have
\[
\zeta(G)\approx \frac{2\log n}{\log 2} = 2\log_2(n).
\]
\end{corollary}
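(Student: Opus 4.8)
The plan is to specialize Theorem~\ref{thm:diam2} to the constant density $p=\tfrac12$, using the fact that ``almost all graphs'' refers to the uniform random graph on $n$ vertices, which is precisely $G_{n,1/2}$; a statement holds for almost all $G$ exactly when it holds a.a.s.\ for $G_{n,1/2}$. (One could also invoke the preceding corollary together with the remark that the conclusion holds for any constant $p$, but going back to Theorem~\ref{thm:diam2} is cleaner.) First I would verify that $p=\tfrac12$ lies in the admissible range of Theorem~\ref{thm:diam2} for all large $n$: since $\bfrac{2\log n+\omega}{n}^{1/2}\to 0$ and $1-\tfrac{3\log\log n}{\log n}\to 1$, we have $\bfrac{2\log n+\omega}{n}^{1/2}\le \tfrac12\le 1-\tfrac{3\log\log n}{\log n}$ once $n$ is large.

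Next I would compute the relevant quantities. With $p=q=\tfrac12$ we get $\rho=p^2+q^2=\tfrac12$, hence $\log(1/\rho)=\log 2$ and
\beq{eq:phalf}{
\frac{2\log n}{\log 1/\rho}=\frac{2\log n}{\log 2}=2\log_2 n.
}
Moreover $\eta=\frac{\log(1/p)}{\log n}=\frac{\log 2}{\log n}=o(1)$, so the lower-bound prefactor satisfies $1-2\eta-\frac{4\log\log n}{\log n}=1-o(1)$.

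For the upper bound I must check that a suitable constant $c$ exists. Since $\log(1/p)=\log 2$ is constant,
\[
\frac{1}{2}\brac{\frac{\log n-3\log\log n}{\log 1/p}-1}=\frac12\brac{\frac{\log n-3\log\log n}{\log 2}-1}\to\infty,
\]
so for large $n$ the minimum defining the admissible range of $c$ equals $1$, and we may fix any constant $c\in(0,1)$, say $c=\tfrac12$. Then $c\eta=o(1)$ and the upper-bound prefactor is $1-c\eta=1-o(1)$.

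Combining these observations with \eqref{eq:phalf}, Theorem~\ref{thm:diam2} yields, a.a.s.,
\[
(1-o(1))\,2\log_2 n\le \zeta(G_{n,1/2})\le (1-o(1))\,2\log_2 n,
\]
which is exactly $\zeta(G)\approx \frac{2\log n}{\log 2}=2\log_2 n$. The entire argument amounts to verifying that every error term in Theorem~\ref{thm:diam2} vanishes in the constant-density regime; the only point needing care is confirming that the constraints on $p$ and on the auxiliary constant $c$ are simultaneously satisfiable at $p=\tfrac12$, which holds for all sufficiently large $n$. I do not expect any genuine obstacle beyond this bookkeeping.
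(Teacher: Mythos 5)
Your proof is correct and follows essentially the same route as the paper: the paper obtains this corollary by specializing Theorem~\ref{thm:diam2} to constant $p$ (via the remark after the preceding corollary that $\eta=o(1)$ there), then setting $p=\tfrac12$ so that $\rho=\tfrac12$ and $\log 1/\rho=\log 2$. Your direct verification that $p=\tfrac12$ lies in the admissible range, that any constant $c\in(0,1)$ is allowed, and that both prefactors are $1-o(1)$ is exactly the bookkeeping the paper leaves implicit.
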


\subsection{Proof of Theorem~\ref{thm:diam2} -- lower bound}\ \\

Since we will deal with ``mostly independent'' random variables, we will use the following form of Suen's inequality (see, e.g. \cite{JLR}).
\begin{theorem}[Suen's Inequality]\label{ineq:suen}
Let $\th_i,i\in I$ be indicator random variables which take value 1 with probability~$p_i$. Let $L$ be a dependency graph. Let $X = \sum_{i\in I}\th_i$, and $\mu = \E(X) = \sum_{i\in I} p_i$. Moreover, write $i\sim j$ if $ij \in E(L)$, and let $\Delta = \frac{1}{2}\sum_{i\sim j} \E(\th_i\th_j)$ and $\delta = \max_i\sum_{j\sim i} p_j$. Then,
$$\Pr(X = 0) \le \exp\left\{-\min\set{ \frac{\mu^2}{8\Delta}, \frac{\mu}{2}, \frac{\mu}{6\delta} } \right\}.$$
\end{theorem}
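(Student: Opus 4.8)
The plan is to first prove the standard ``base'' form of Suen's inequality,
\[
\Pr(X=0)\le \exp\brac{-\mu+\Delta e^{2\delta}},
\]
and then to derive the stated three-term bound from it by a thinning argument. Since the $\th_i$ are indicators, $\Pr(X=0)=\E\left[\prod_{i\in I}(1-\th_i)\right]$; were the $\th_i$ independent this would equal $\prod_i(1-p_i)\le e^{-\mu}$, and the role of the dependency graph $L$ is exactly to control the deviation from this product.

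For the base bound I would fix a linear order on $I$, set $P_k=\prod_{i\le k}(1-\th_i)$ with $P_0=1$, and telescope:
\[
\Pr(X=0)=\E[P_m]=\prod_{k=1}^m\frac{\E[P_k]}{\E[P_{k-1}]}=\prod_{k=1}^m\brac{1-\frac{\E[P_{k-1}\th_k]}{\E[P_{k-1}]}}.
\]
To bound a single factor, split $P_{k-1}=A_kB_k$, where $A_k$ is the product of the $(1-\th_i)$ over earlier neighbors $i\sim k$ and $B_k$ the product over earlier non-neighbors. Since $\th_k$ is independent of $B_k$, writing $\E[\th_kA_kB_k]=\E[\th_kB_k]-\E[\th_k(1-A_k)B_k]$ and using $1-A_k\le\sum_{i<k,\,i\sim k}\th_i$ yields a lower bound of the form $\E[P_{k-1}\th_k]/\E[P_{k-1}]\ge p_k-c_k$. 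Hence each factor is at most $e^{-p_k+c_k}$, and the product is at most $\exp\brac{-\mu+\sum_k c_k}$. Each unordered neighbor pair is counted once, so $\sum_k c_k$ is, up to the distortion coming from the ratios $1/\E[P_{k-1}]$, equal to $\Delta=\frac12\sum_{i\sim j}\E(\th_i\th_j)$; controlling that distortion is what produces the factor $e^{2\delta}$.

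Given the base bound, the $\min$ form follows cleanly. Introduce i.i.d.\ Bernoulli($t$) variables $\xi_i$, independent of everything else, and set $\th_i'=\th_i\xi_i$; the same graph $L$ is a dependency graph for the $\th_i'$, and the parameters scale as $\mu\mapsto t\mu$, $\Delta\mapsto t^2\Delta$, $\delta\mapsto t\delta$. Since $\set{X=0}\seq\set{X'=0}$ where $X'=\sum_i\th_i'$, the base bound applied to the $\th_i'$ gives
\[
\Pr(X=0)\le\exp\brac{-t\mu+t^2\Delta e^{2t\delta}}\qquad\text{for every }t\in(0,1].
\]
Optimizing over $t$ produces the three regimes: $t=\mu/(4\Delta)$ (when this is at most $1$ and $t\delta$ is sufficiently small that $e^{2t\delta}\le 2$) gives exponent $\mu^2/(8\Delta)$; $t=1$ gives $\mu/2$ when $\Delta$ is small relative to $\mu$; and $t\asymp 1/\delta$ gives $\mu/(6\delta)$ when $\delta$ is large. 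The constants $8$, $2$, $6$ are precisely what make these three cases fit together into the single $\min$.

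The main obstacle is the bookkeeping in the base inequality that yields exactly $e^{2\delta}$. The naive step of bounding $B_k\le1$ and discarding it is too lossy, since $1/\E[P_{k-1}]$ can be enormous; instead one must retain $B_k$ and compare $\E[\th_k\th_iB_k]$ with the full partial expectation $\E[P_{k-1}]$, reinserting the deleted neighbor-products at both endpoints $i$ and $k$ of each edge. Each reinsertion costs at most a factor $e^{\delta}$, and the two endpoints together account for the exponent $2\delta$. Once this estimate is in place, the thinning and optimization are routine calculus.
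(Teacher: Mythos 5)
The paper itself gives no proof of this statement to compare against: the theorem is quoted (via [JLR]) from the literature, being Janson's 1998 sharpening of Suen's inequality, so your attempt must be judged on its own. Its second half stands up: Bernoulli$(t)$ thinning does preserve $L$ as a dependency graph (the $\xi_i$ are independent of everything, so families over non-adjacent index sets stay independent), the parameters scale as $(\mu,\Delta,\delta)\mapsto(t\mu,t^2\Delta,t\delta)$, and $\set{X=0}\seq\set{X'=0}$; taking $t=\min\set{1,\ \mu/(4\Delta),\ 1/(3\delta)}$ gives $e^{2t\delta}\le e^{2/3}<2$, hence $-t\mu+t^2\Delta e^{2t\delta}\le -\frac{t\mu}{2}=-\min\set{\frac{\mu}{2},\frac{\mu^2}{8\Delta},\frac{\mu}{6\delta}}$, which is exactly the stated bound. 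So you have correctly reduced the theorem to the classical inequality $\Pr(X=0)\le\exp\brac{-\mu+\Delta e^{2\delta}}$.

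The genuine gap is in your proof of that base inequality, precisely at the step you describe as bookkeeping: the claim that reinserting the deleted neighbor factors costs at most $e^{\delta}$ per endpoint. That claim is false. The cost of reinserting a factor $(1-\th_j)$ into $\E\left[\prod_{j'\in J}(1-\th_{j'})\right]$ is the conditional probability $\Pr(\th_j=0\mid \th_{j'}=0\ \forall j'\in J)$, and this cannot be bounded below by any function of the unconditional $p_j$'s or of $\delta$: take $\th_1,\dots,\th_m$ i.i.d.\ Bernoulli$(1/2)$ and $\th_{j^*}=\mathbf{1}\set{\th_1=\cdots=\th_m=0}\cdot\xi$ with $\xi$ an independent Bernoulli$(1-\e)$; then $\E\prod_{j\le m}(1-\th_j)=2^{-m}$ while $\E\left[\prod_{j\le m}(1-\th_j)\right](1-\th_{j^*})$ has expectation $\e\,2^{-m}$, so the reinsertion factor is $1/\e$, which for $\e=e^{-2m}$ is $e^{2m}$, far exceeding $e^{2\delta}\approx e^{m}$. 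Embedding this configuration into your telescoping (with $\th_i=\th_k=\xi\cdot\mathrm{Bern}(1/2)$, so that $j^*$ is an earlier neighbor of the edge $\set{i,k}$), even the combined per-edge estimate $\E[\th_i\th_k B_k]\le e^{2\delta}\E[\th_i\th_k]\E[P_{k-1}]$ fails outright: it reduces to $1\le e^{2\delta}\e$ with $\delta=\frac{m}{2}+1-\e$, false for $m\ge 3$. Suen's inequality survives such examples only because strong positive correlations of this kind force $\Delta e^{2\delta}$ to be so large that the bound becomes vacuous --- a global fact that the local, edge-by-edge argument you propose cannot see; any correct proof must control the distortion $\E[B_{k,i}]/\E[P_{k-1}]$ and the correlation terms jointly, which is exactly the delicate induction in Janson's paper. (One partial rescue worth noting: your choice of $t$ guarantees $t\delta\le 1/3$, so you only need the base inequality when $\delta\le 1/3$; but even there the reinsertion cost must be bounded through conditional probabilities $\Pr(\th_j=1\mid\th_J=0)$ by an induction on $|J|$, not through the unconditional $p_j$'s as you do.) As it stands, your proposal is a valid reduction of the stated theorem to the bound $\exp(-\mu+\Delta e^{2\delta})$, but not a proof of it.
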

We will also use the following simple fact.
\begin{lemma}\label{claim:32}
Let $0<p<1$ and $p+q=1$. Then, 
\[
\frac{\log(p^3+q^3)}{\log\r} \ge \frac{3}{2}.
\]
\end{lemma}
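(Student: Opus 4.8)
The plan is to reduce the claimed inequality to an elementary polynomial inequality in a single variable. First I would record two identities that follow from $p+q=1$: since $\r = p^2+q^2 = 1-2pq$ we have $pq = (1-\r)/2$, and since $p^3+q^3 = (p+q)(p^2-pq+q^2) = \r - pq$, substituting gives
\[
p^3 + q^3 = \r - \frac{1-\r}{2} = \frac{3\r - 1}{2}.
\]

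Next I would exploit the sign of the denominator. For $p\in(0,1)$ we have $pq\in(0,1/4]$, hence $\r \in [1/2,1)$ and $\log\r < 0$. Multiplying the target inequality $\frac{\log(p^3+q^3)}{\log\r} \ge \frac32$ through by $\log\r$ therefore reverses it, so it is equivalent to $\log(p^3+q^3) \le \frac32\log\r = \log(\r^{3/2})$, i.e., using monotonicity of $\log$ together with the identity above, to
\[
\frac{3\r-1}{2} \le \r^{3/2}, \qquad \text{equivalently} \qquad 2\r^{3/2} - 3\r + 1 \ge 0.
\]

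Finally I would verify this last inequality on $\r\in[1/2,1)$ by the substitution $t=\sqrt{\r} \in [1/\sqrt{2},\, 1)$, which turns the left-hand side into the cubic $2t^3 - 3t^2 + 1$. Since $t=1$ is a root, it factors as
\[
2t^3 - 3t^2 + 1 = (t-1)^2(2t+1),
\]
which is manifestly nonnegative for $t \ge -1/2$, and in particular throughout the relevant range; this proves the claim.

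The only point requiring genuine care is the direction of the inequality when clearing the negative denominator $\log\r$: if one forgets that $\log\r<0$ the reduction goes the wrong way. Apart from checking that $\r$ indeed stays in $[1/2,1)$ so that this sign is fixed, the argument is a routine computation, and I do not anticipate a substantive obstacle.
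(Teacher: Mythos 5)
Your proof is correct, and it verifies the inequality by a genuinely different algebraic route than the paper. Both arguments begin the same way: since $\rho=p^2+q^2=1-2pq\in[1/2,1)$, the denominator $\log\rho$ is negative, so the claim is equivalent to $\log(p^3+q^3)\le\frac{3}{2}\log\rho$ (you are right that this sign flip is the one step where care is needed, and you handle it correctly). From there the paper avoids fractional powers by passing to $(p^3+q^3)^2\le(p^2+q^2)^3$, expanding both sides symmetrically in $p,q$, cancelling $p^2q^2$, and reducing to $pq\le 3/8$, which holds since $pq\le 1/4$. You instead invoke the constraint $p+q=1$ at the outset to collapse everything to the single variable $\rho$, via $p^3+q^3=\rho-pq=(3\rho-1)/2$, so the claim becomes $2\rho^{3/2}-3\rho+1\ge 0$ on $[1/2,1)$, which you settle with the substitution $t=\sqrt{\rho}$ and the factorization $2t^3-3t^2+1=(t-1)^2(2t+1)$. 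Each approach has a small advantage: the paper's two-variable inequality $(p^3+q^3)^2\le(p^2+q^2)^3$ is an instance of the power-mean (Lyapunov) comparison and in fact holds for arbitrary nonnegative $p,q$, with no need for $p+q=1$; your reduction is a clean one-variable computation whose explicit factorization makes the equality case transparent (equality only at $t=1$, i.e., in the degenerate limit $pq\to 0$). Both proofs are complete and elementary.
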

\begin{proof}
This inequality is equivalent to
\[
\log(p^3+q^3)^2 \le \log(p^2+q^2)^3
\]
and so to
\[
(p^3+q^3)^2 \le (p^2+q^2)^3.
\]
The latter is equivalent to 
\[
2p^3q^3 \le 3p^4q^2 + 3 p^2q^4 = 3p^2q^2 (p^2+q^2) = 3p^2q^2 (1 - 2pq )
\]
and consequently to
\[
2pq \le 3(1-2pq)
\]
which is equivalent to 
\[
pq \le \frac{3}{8}.
\]
But this is always true since $pq\leq \frac14$.
\end{proof}

The lower bound in Theorem~\ref{thm:diam2} will follow from the following result.

\begin{lemma}\label{lem:diam2:low}
Let 
\[
\frac{\log^2n}{n^{1/2}} < p \le 1-\frac{1}{\log n}
\quad \text{ and } \quad
\e = \frac{2\log\left( \frac{\log^2 n}{p}\right)}{\log n} \quad \text{ and } \quad k = \frac{2(1-\e) \log n}{\log1/\r}.
\]
Then a.a.s.,
$$\zeta(G_{n,p}) \ge k.$$
\end{lemma}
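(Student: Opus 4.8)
The plan is to show that with $k = \frac{2(1-\e)\log n}{\log 1/\r}$ tests per round, the cop cannot guarantee a win, which means I must exhibit (with high probability over the choice of $G_{n,p}$) a persistent ambiguity that the robber can exploit. The natural approach is a second-moment / counting argument: I want to show that for every test set $W$ of size $k$ that the cop might play, there exist at least two vertices outside $W$ sharing the same $W$-signature, so the cop can never pin down a unique location. Since the diameter is~2, the probability that a fixed pair $u,v$ (with $u,v\notin W$) has the same $W$-signature is exactly $\r^{|W|}=\r^k$, as computed in the excerpt. The expected number of same-signature pairs relative to a fixed $W$ is therefore about $\binom{n}{2}\r^k$, and with the chosen $k$ this quantity is $n^{2\e}\cdot(\text{lower order})$, which tends to infinity. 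So in expectation ambiguity is abundant; the work is to make this hold \emph{simultaneously for all} choices of~$W$ and to turn it into a robber strategy.

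**From counting to a robber strategy.** The cleanest formulation is to define, for each candidate test set $W$ with $|W|=k$, the ``ambiguity graph'' $H_W$ on $V\sm W$ whose edges are the pairs with identical $W$-signature, and to argue that a.a.s.\ $H_W$ has a large component, or at least contains many edges, for every $W$ simultaneously. A union bound over the $\binom{n}{k}\le n^k$ choices of $W$ is the crude device; since $k$ is only polylogarithmically smaller than the true metric dimension, $n^k = \exp(O(k\log n))$ is large, so I will need the per-$W$ failure probability to beat this. This is precisely where Suen's inequality enters: fixing $W$, let $\th_{uv}$ be the indicator that $\{u,v\}$ is a same-signature pair, let $X=\sum \th_{uv}$, and apply Suen to bound $\Pr(X=0)$ by $\exp\{-\min(\mu^2/8\Delta,\ \mu/2,\ \mu/6\d)\}$. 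The dependency graph $L$ joins two pairs when they share a vertex (pairs sharing no vertex are independent given $W$), so $\d=O(n\r^k)$ and $\Delta$ involves triples $u,v,w$ agreeing in signature, contributing $\E(\th_{uv}\th_{uw})=(p^3+q^3)^k$. Here Lemma~\ref{claim:32} does the decisive work: $(p^3+q^3)^k \le \r^{3k/2}$, so $\Delta = O(n^3\r^{3k/2})$ and $\mu^2/\Delta = \OM(n^4\r^{2k}/(n^3\r^{3k/2})) = \OM(n\r^{k/2})$, which is $\exp(\OM(\e\log n))$ and large enough to crush the union bound.

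**Executing the union bound.** Concretely, I would verify that each of the three quantities $\mu^2/8\Delta$, $\mu/2$, $\mu/6\d$ exceeds $(1+o(1))k\log n$ with room to spare, using $\mu\approx\binom{n}{2}\r^k\approx n^{2\e}/2$ up to lower-order factors (the exact bookkeeping depends on the definition of $\e$), the bound $\d=O(\mu/n)$, and the Lemma-$\ref{claim:32}$ bound on $\Delta$. The term $\mu/2$ is the binding constraint: I need $\mu/2 \gg k\log n$, i.e.\ $n^{2\e}\gg (\log n)\cdot\frac{2(1-\e)\log n}{\log 1/\r}$, which holds because $\e$ is defined so that $n^{\e}$ beats all polylogarithmic and $1/\log(1/\r)$ factors. (This is exactly why $\e$ carries the $\log^2 n/p$ rather than just a constant inside the logarithm.) Once $\Pr(X_W=0)\le \exp(-2k\log n)$, say, a union bound over $\le n^k=\exp(k\log n)$ sets gives that a.a.s.\ every $W$ admits a same-signature pair.

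**The remaining gap and the main obstacle.** A single same-signature pair per round is not literally enough: the cop plays adaptively over many rounds, and the robber must move along an edge each round while staying indistinguishable. The honest way to close this is to show not merely that $X_W\ge 1$ but that the ambiguity graph $H_W$ has minimum degree or a spanning structure allowing the robber to always have a same-signature twin \emph{and} an escape move; the standard trick is to argue $H_W$ (or the union of ambiguity relations over all $W$) contains, say, a long path or a connected subgraph on a linear-sized vertex set, so the robber can maintain a ``shadow'' vertex with identical signature forever. I expect the main obstacle to be precisely this passage from ``ambiguity exists for each fixed $W$'' to ``a robber strategy survives against the adaptive cop'': the per-round bound must be strengthened (via the same Suen computation, now bounding $\Pr(X_W < T)$ for a threshold $T$, or by showing $H_W$ has many edges incident to a common robber trajectory) so that after the robber's forced move the new position still has a twin against the next test set. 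The quantitative heart — getting the Suen exponent to dominate $k\log n$ — is routine given Lemma~\ref{claim:32}; the combinatorial heart is organizing the indistinguishable pairs into a trajectory the robber can follow.
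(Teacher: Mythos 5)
You have correctly identified the quantitative engine (Suen's inequality with the dependency graph on overlapping pairs, Lemma~\ref{claim:32} to control $\Delta$, and a union bound over test sets), and your estimates would go through essentially as you sketch them --- though note that the binding term is $\mu^2/8\Delta$, not $\mu/2$, since $\mu^2/8\Delta = \Theta(n^{\varepsilon})$ while $\mu/2=\Theta(n^{2\varepsilon})$. But the gap you flag at the end is real, and it is precisely the point where your formulation of the first-moment quantity is the wrong one. Counting same-signature pairs anywhere in $V\setminus W$ does not produce a robber strategy: as you say, the robber must move along an edge each round, and an ambiguous pair far from his current position is useless. Your proposed repairs (showing the ambiguity graph $H_W$ has large components, long paths, or a spanning structure the robber can traverse) head toward a substantially harder statement that is not needed.

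The missing idea is to localize the count to the robber's neighborhood: for each vertex $u$ \emph{and} each $k$-set $S$, let $X_{u,S}$ be the number of pairs $v,w\in N(u)$ with the same $S$-signature. The same Suen computation applies with an extra factor $p^2$ in $\mu$ (and $p^3$ in $\Delta$), giving $\Pr(X_{u,S}=0)\le \exp\{-\tfrac{1}{64}pn^{\varepsilon}\}$, and the union bound is now over $n\binom{n}{k}$ pairs $(u,S)$ rather than $\binom{n}{k}$ sets --- the extra factor $n$ costs nothing since $pn^{\varepsilon}=\log^4 n/p$ dominates $(k+1)\log n\le \log^3 n/p$ (this is exactly why $\varepsilon$ carries $\log^2 n/p$ inside the logarithm). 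The conclusion ``a.a.s.\ every vertex $u$ and every $k$-set $S$ admit two neighbors of $u$ with the same $S$-signature'' immediately yields perpetual evasion: whenever the cop queries $S$, the robber standing at $u$ has two indistinguishable neighbors to move to, so after answering with their common signature the cop faces at least two consistent current positions, and the ambiguity regenerates from scratch every round with no need for any global structure in $H_W$. This one-line change in what is counted is the combinatorial heart you were looking for, and it is where your proposal, as written, does not close.
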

First observe that $\e=2\eta+\frac{4\log\log n}{\log n}$ and so the lower bound in Theorem~\ref{thm:diam2} holds.
\begin{proof}
For a fixed vertex $u$ and $k$-set $S$ let $X_{u,S}$ count the number of unordered pairs $w,v\in N(u)$ with the same signature induced by  $S$. We prove that the probability that there is a vertex $u$ and a $k$-set~$S$ such that $X_{u,S}=0$ is $o(1)$. Consequently, this will imply that a.a.s.\ for every vertex $u$ and $k$-set $S$ there are at least two neighbors of $u$ with the same signature in $S$. Hence, a.a.s.\ the localization number is at least~$k$.

Clearly, 
\begin{align*}
\mu=\E(X_{u,S}) = \binom{n - k-1}{2} \r^k p^2 &\ge \frac{p^2}{4}\exp\{k\log \r +2\log n\}\\
& =\frac{p^2}{4}\exp\{-2(1-\e) \log n +2\log n\} = \frac{p^2}{4}n^{2\e}.
\end{align*}
Furthermore, since every triple of vertices in $N(u)$ with the same signature contributes three unordered pairs of variables to $\Delta$, we get
\begin{align*}
\Delta &\le  3\binom{n}{3} (p^3+q^3)^k p^3\\
&\leq \frac{p^3}{2} \exp\left\{ k\log(p^3+q^3) + 3\log n\right\}\\
&= \frac{p^3}{2} \exp\left\{ -2(1-\e)(\log n) \frac{\log(p^3+q^3)}{\log \r} + 3\log n \right\}.
\end{align*}
Now, by Lemma~\ref{claim:32},
\[
\Delta \le \frac{p^3}{2} \exp\left\{ -2(1-\e)(\log n) \cdot \frac{3}{2} + 3\log n \right\}
= \frac{p^3}{2} n^{3\e}.
\]
Similarly 
\begin{align*}
\delta \le 2n \r^k p^2
= 2p^2 \exp\left(k\log\r + \log n\right)
=2p^2 n^{-1+2\e}.
\end{align*}

Thus,
\[
\frac{\mu^2}{8\Delta} \ge \frac{1}{64} pn^{\e},
\quad \frac{\mu}{2} \ge \frac{1}{8} (pn^{\e})^2
\quad \text{ and } \quad \frac{\mu}{6\delta} \ge \frac{1}{48} n.
\]
Since $0<\e<1$ and $pn^\e \to \infty$ (due to our choice of $\e$) the lower bound in the first inequality is the smallest. Hence, by Theorem \ref{ineq:suen},
\[
\Pr(X_{u,S} = 0) \le \exp\left\{ - \frac{1}{64} pn^{\e}\right\}.
\]

Now we use the union bound to show that the probability that there is a vertex $u$ and a $k$-set~$S$ such that $X_{u,S}=0$ is $o(1)$. Indeed, this probability is at most
\begin{equation}\label{eq:union_bound_XuS}
n\binom{n}{k}  \exp\left\{ - \frac{1}{64} pn^{\e}\right\}
\le \exp\left\{ (k+1)\log n - \frac{1}{64} pn^{\e}\right\}.
\end{equation}
Now observe that $\r = (p+q)^2 - 2pq = 1-2pq$ and so
\[
k = \frac{2(1-\e) \log n}{\log1/\r} = -\frac{2(1-\e) \log n}{\log(1-2pq)} \le -\frac{2 \log n}{\log(1-2pq)}.
\]
Since $1-x \le e^{-x}$ and $2pq < 1$, we get that
\[
k\log n \le\frac{(\log n)^2}{pq}.
\]
Furthermore, since by assumption $p \le 1-\frac{1}{\log n}$, we obtain $q\ge \frac{1}{\log n}$ and so
\[
k\log n \le \frac{(\log n)^3}{p}.
\]
Also 
\[
pn^\e = pe^{\e\log n} = \frac{(\log n)^4}{p}.
\]
Thus, the exponent in~\eqref{eq:union_bound_XuS} tends to $-\infty$. This completes the proof of Lemma~\ref{lem:diam2:low}. 
\end{proof}

\subsection{Proof of Theorem~\ref{thm:diam2} -- upper bound}\ \\
Let $\deg(v)$ denote the degree of vertex $v$ in $G_{n,p}$ and let $\codeg(v,w)$ denote the co-degree of vertices $v,w$ in $G_{n,p}$. We observe next that the Chernoff bounds imply that a.a.s.
\begin{align}
\deg(v)&=np+O((np\log n)^{1/2})\text{ for all }v\in [n].\label{degrees}\\
\codeg(v,w)&=np^2+O((np^2\log n)^{1/2})\text{ for all }v\in [n].\label{co-degrees}
\end{align}
\begin{lemma}\label{lem:ub}
\noindent
\begin{enumerate}[(i)]
\item\label{lem:ub:i} Let 
\[
e^{-\frac{\log n}{\omega}} \le p \le 1 - \frac{3\log \log n}{\log n}.
\]
Then, a.a.s.\ 
$$\zeta(G_{n,p}) \lesssim \frac{2\log n}{\log1/\r}.$$ 
\item\label{lem:ub:ii} Let
\[
\bfrac{2\log n+\omega}{n}^{1/2} \le p \le e^{-\Omega(\log n)}
\quad \text{ and } \quad
\eta = \frac{\log1/p}{\log n} \quad \text{ and } \quad k = \frac{2(1-c\eta) \log n}{\log1/\r},
\]
where
\[
0< c < \min\left\{ \frac{1}{2}\left( \frac{\log n - 3\log\log n}{\log1/p}  - 1\right), 1 \right\}.
\]
Then, a.a.s. 
$$\zeta(G_{n,p}) \le k.$$ 
\end{enumerate}
\end{lemma}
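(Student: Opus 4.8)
The two parts are handled separately, and only the second requires work.

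Part~(\ref{lem:ub:i}) is immediate. The localization number never exceeds the metric dimension, $\zeta(G)\le\beta(G)$ for every graph, and the hypothesis $p\ge e^{-\log n/\omega}$ places $p$ inside the range of Theorem~\ref{thm:metric}; hence a.a.s.\ $\beta(G_{n,p})\lesssim \frac{2\log n}{\log(1/\rho)}$ and the same bound holds for $\zeta$. The only reason to isolate this range is that here $\eta=\frac{\log(1/p)}{\log n}\le 1/\omega=o(1)$, so the factor $1-c\eta$ of part~(\ref{lem:ub:ii}) is $1-o(1)$ and buys nothing.

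For part~(\ref{lem:ub:ii}) the plan is to exhibit an explicit adaptive strategy and show it wins a.a.s. The cop maintains the \emph{candidate set} $C$ of vertices where the robber could currently be, starting from $C=[n]$. In a round the cop queries a size-$k$ set $W$; the observed signature restricts the robber to the subset $A\subseteq C$ of vertices of $C$ carrying that signature, and the cop wins as soon as $|A|=1$. Otherwise the robber moves and the candidate set becomes $N(A)$, whereupon the cop queries afresh. The whole game is therefore governed by the evolution of $|C|$ under the two operations ``intersect with a signature class'' and ``pass to the neighbourhood''.

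The engine of the proof is a per-round contraction estimate. Since $p<\tfrac12$, the largest of the $2^k$ signature classes of a size-$k$ test set is the all-non-adjacent one, of size $\approx nq^{k}=n^{c\eta}$; thus any query already leaves $|A|\le n^{c\eta}(1+o(1))$. The robber's move inflates this by at most the maximum degree, which is $(1+o(1))np$ by \eqref{degrees}, while the following query strips away all but at most a $\approx q^{k}$ fraction (the worst case being again the all-non-adjacent class). Over one complete round the candidate set is therefore multiplied by at most
\[
np\cdot q^{k}\;\approx\;n^{1-\eta}\cdot n^{-(1-c\eta)}\;=\;n^{-\eta(1-c)}\;<\;1 ,
\]
using $c<1$ (here $\log(1/q)\approx\tfrac12\log(1/\rho)$ for the relevant small $p$). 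Iterating, after $t$ rounds the candidate set has size at most $n^{\eta(1-t(1-c))}$, which drops below $1$ once $t>\frac1{1-c}$; as $c$ is a fixed constant this is a bounded number of rounds, at the end of which $|A|=1$ and the robber is located. Equivalently, once $|C|$ has been pushed below the separation threshold $n^{1-c\eta}$ (where $\binom{|C|}{2}\rho^k<1$) one further, suitably chosen query gives all of $C$ distinct signatures and finishes the game.

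The delicate point—and where the bound on $c$ enters—is justifying the contraction estimate $|N(A)\cap A'|\lesssim |N(A)|\,q^{k}$ for the \emph{adversarially produced} sets $A$, since the candidate set depends both on the random graph and on the robber's choices and so cannot simply be treated as a random subset. I would control this by deferred exposure: keep the test sets $W_1,\dots,W_t$ pairwise disjoint and reveal the edges meeting $W_t$ only in round $t$. Because a robber occupying a test vertex is located instantly (the coordinate distance $0$ is a unique signature value), the candidate-set dynamics before round $t$ can be described without reference to the edges meeting $W_t$; conditioned on that history these edges are fresh, $|N(A)\cap A'|$ is binomial with mean $|N(A)|q^{k}$, and a Chernoff bound supplies the required concentration at each of the $O(1/(1-c))$ rounds. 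The constant $c$ is restricted precisely so that the relevant class sizes stay in the window where this concentration holds and the terminal separation is feasible in the range dictated by the metric-dimension result of~\cite{BMP}—equivalently, so that $n^{c\eta}\lesssim\sqrt{np}$ up to a polylogarithmic factor, which is exactly what the $3\log\log n$ term encodes. This exposure-and-concentration bookkeeping, rather than the geometric-shrinkage idea itself, is the main obstacle.
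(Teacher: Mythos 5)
Your part~(\ref{lem:ub:i}) is exactly the paper's proof: $\zeta\le\beta$ together with Theorem~\ref{thm:metric}. Your part~(\ref{lem:ub:ii}) also shares the paper's skeleton: each round the signature query cuts the consistent set by a factor $\approx n^{-(1-c\eta)}$, the robber's move inflates it by at most the maximum degree $\approx np=n^{1-\eta}$ (by \eqref{degrees}), giving net contraction $n^{-(1-c)\eta}$ per round, so a bounded number of rounds suffices because $\eta=\Omega(1)$ and $c<1$ in this range. The gap is in the step you yourself flag as the main obstacle, and your proposed fix for it does not work. Deferred exposure would make the edges meeting $W_t$ ``fresh'' conditionally on the history only if nothing in the history could depend on them. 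But in your framework the test sets $W_1,\dots,W_t$ are fixed in advance and the robber knows the \emph{entire} graph from the start; consequently the robber's choice of signature class at rounds $1,\dots,t-1$ (hence the candidate set itself) may be a function of precisely those unexposed edges --- for instance, the robber can steer toward a class $A$ for which $N(A)$ happens to crowd into a single $W_t$-signature class. Conditioning on such a history does not leave the edges to $W_t$ i.i.d.\ Bernoulli, so the binomial/Chernoff step is unjustified. Your parenthetical remark (a robber standing on a test vertex is located instantly) addresses a different, harmless issue. There is also a secondary leakage you assert away: whether $v\in N(A)$ uses edges from $A$ to $W_t$ whenever $A\cap W_t\neq\emptyset$; this one is repairable since $|A|=o(k)$, but the robber's foreknowledge of the graph is not repaired by your argument. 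A salvage within the edge-randomness framework would require a union bound over the whole game tree (branching up to $n$ per round), with events indexed by vertex representatives of classes rather than by the random classes themselves; Chernoff tails could in principle support this, but you have not carried it out.

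The paper sidesteps the difficulty by putting the randomness in the \emph{test sets}, not the edges: each $S_{i+1}$ is a fresh uniformly random $k$-set, chosen after the robber's move, and the graph is fixed subject only to the all-pairs degree and codegree concentration \eqref{degrees}--\eqref{co-degrees}. Because those conditions hold simultaneously for every pair $v,w$, the bound $\Pr_{S_{i+1}}\brac{S_{i+1}\cap D(v,w)=\emptyset}\le(1+o(1))\rho^k$ applies to an \emph{arbitrary}, adversarially produced candidate set $T_{i+1}$, and since the robber cannot anticipate a set that has not yet been chosen, conditioning on the history is harmless; Markov's inequality on the number of confusable pairs (which simultaneously controls every signature class, including the robber's) then finishes each round. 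This is the substantive difference between the two write-ups, and adopting it is the cleanest repair of yours. A smaller point: you attribute the constraint $c<\frac{1}{2}\brac{\frac{\log n-3\log\log n}{\log 1/p}-1}$ to a window needed for your concentration step ($n^{c\eta}\lesssim\sqrt{np}$ up to polylogs); the paper's own proof of part~(\ref{lem:ub:ii}) never invokes that inequality --- it uses only $c<1$ (for the contraction) and $\log(1/p)=\Omega(\log n)$ (so that \eqref{eq:Xl} tends to $0$ for a bounded constant $\ell$).
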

\begin{proof}
Part~\eqref{lem:ub:i} follows immediately from Theorem~\ref{thm:metric}. 
\medskip

We now prove~\eqref{lem:ub:ii}. Equations \eqref{degrees} and \eqref{co-degrees} plus our bound of two on the diameter are all we need for this. So the analysis works for any graph satisfying these conditions.  Let $S_1$ be a randomly chosen $k$-subset of $V$ and let $X_1$ be the number of pairs with the same signature in $S_1$. Then, if $$D(v,w)=(N(v)\setminus N(w))\cup (N(w)\setminus N(v))$$
 for $v,w\in [n]$ then
\begin{align}
\E(X_1) &=\sum_{v\neq w}\Pr((N(v)\cap S_1)=(N(w)\cap S_1))\nonumber\\
&=\sum_{v\neq w}\Pr(S_1\cap D(v,w)=\emptyset)\nonumber\\
&\leq n^2\brac{1-2p(1-p)\brac{1+O\bfrac{\log^{1/2}n}{n^{1/2}}}}^k\label{xx}\\
&=n^2\rho^k\brac{1+O\bfrac{k\log^{1/2}n}{n^{1/2}}}\label{xxx}\\
&=(1+o(1))n^{2c\eta}.\label{first}
\end{align}
and by the Markov inequality we have $X_1 \le \om n^{2c\eta}$ a.a.s.. (Going from \eqref{xx} to \eqref{xxx} uses the trivial identity $1-a(1-\e)=(1-a)\brac{1+\frac{a\e}{1-a}}$.) Thus, the set $R$ of vertices with exactly the same signature in $S$ as the robber is a.a.s. of size at most $\om^{1/2}n^{c\eta}$. Let $T_2$ consist of $R$ and the set of neighbors of $R$. The robber can move to somewhere in $T_2$. Clearly, $|T_2| \le 2\om^{1/2} n^{c\eta} pn$ a.a.s..

Now let $S_2$ be another random $k$-subset of $V$, chosen independently of $S_1$. Let $X_2$ be the number of pairs of vertices from $T_2$ with the same signature in $S_2$. Arguing as for \eqref{first}, we see that
\begin{multline*}
\E(X_2) \le (2\om^{1/2} n^{c\eta} pn)^2 \r^{k} \brac{1+O\bfrac{k\log^{1/2}n}{n^{1/2}p}}\\=(1+o(1)) (2\om^{1/2} p)^2 \exp((2+2c\eta)(\log n) + k\log\r) = (4+o(1))\om p^2 n^{4c\eta} 
\end{multline*}
and by the Markov inequality we get that a.a.s we have $X_2 \le \om^2 p^2 n^{4c\eta}$. Thus, the number of vertices with exactly the same signature as the robber in $S_2$ is at most $\om p n^{2c\eta}$. Let $T_3$ consist of these vertices together with their neighbors. Clearly, $|T_3| \leq2\om  p^2 n^{2c\eta+1}$.

We proceed inductively. Assume that $|T_i| \le 2(\om^{1/2} p)^{i-1} n^{(i-1)c\eta+1}$. Now, arguing as above with another independently chosen $k$-set $S_{i+1}$, we have
\[
\E(X_{i+1}) \le (2+o(1))((\om^{1/2} p)^{i-1} n^{(i-1)c\eta+1})^2 \r^{k} = (2+o(1))(\om^{1/2} p)^{2(i-1)} n^{2ic\eta}
\]
and so by the Markov inequality, 
\beq{Xi}{
X_{i+1}\le \om(\om^{1/2} p)^{2(i-1)} n^{2ic\eta}\text{ a.a.s..}
}
Thus, the number of vertices with exactly the same signature in $S_{i+1}$ is at most\linebreak $\om^{1/2}(\om^{1/2} p)^{i-1} n^{ic\eta}$. Hence,
\[
|T_{i+1}| \le 2\om^{1/2}(\om^{1/2} p)^{i-1} n^{ic\eta} pn = 2(\om^{1/2} p)^{i} n^{ic\eta+1},
\]
completing the induction.

After $\ell$ rounds we get that with probability at least $1-\ell\om^{-1}$ we have, using \eqref{Xi},
\begin{align}\label{eq:Xl}
|X_\ell| &\le \om(\om^{1/2} p)^{2(\ell-2)} n^{2(\ell-1)c\eta} =\om^{\ell-1} \exp\set{2(\ell-2)\log p + 2(\ell-1)c\eta\log n)} \notag \\
&= \om^{\ell-1} \exp\set{-2(\ell-2-c(\ell-1))\log(1/p)}.
\end{align}
Clearly, \eqref{eq:Xl} is $o(1)$ for sufficiently large constant~$\ell$, since by assumption $\log(1/p) = \Omega(\log n)$.
\end{proof}
\section{Summary}
We have separated the localization value $\la(G_{n,p})$ from the metric dimension $\b(G_{n,p})$ in the range where the diameter of $G_{n,p}$ is two a.a.s.. 
It would be interesting to continue the analysis in the range of $p$ for which the diameter of $G_{n,p}$ is at least 3. It would also be of interest to examine the localization game on random regular graphs.

\medskip

\textbf{Acknowledgment} We are grateful to all referees for their detailed comments on an earlier version of this paper.


\begin{thebibliography}{99}
\bibitem{BMP} B. Bollob\'as, P. Pra\l{}at and D. Mitsche, Metric dimension for random graphs, {\em The Electronic Journal of Combinatorics} \textbf{20} (2013).
\bibitem{BN} A. Bonato and R. Nowakowski. \href{https://www.ams.org/books/stml/061/stml061-endmatter.pdf}{The game of cops and robbers on graphs}, American Mathematical Society, 2011.
\bibitem{Bosek} B. Bosek, P. Gordinowicz, J. Grytczuk, N. Nisse, J. Sok\'o\l{} and M. \'Sleszy\'nska-Nowak, \href{https://arxiv.org/pdf/1709.05904.pdf}{Localization game on geometric and planar graphs}, \texttt{arXiv:1709.05904}.
\bibitem{West} J. Carraher, I. Choi, M. Delcourt, L. H. Erickson, and D. B. West, Locating a robber on a graph via distance queries, {\em Theoretical Computer Science} \textbf{463}, pp. 54--61 (2012).
\bibitem{FK} A.M. Frieze and M. Karo\'nski, Introduction to Random Graphs, {\em Cambridge University Press}, 2015.
\bibitem{HJK} J.~Haslegrave, R.~Johnson and S.~Koch, Locating a robber with multiple probes, {\em Discrete Mathematics}~\textbf{341} (2018), no. 1, 184--193.
\bibitem{JLR} S. Janson, T. {\L}uczak and A. Ruci\'nski, Random Graphs, {\em Wiley}, 2000.
\bibitem{S} S.~Seager, Locating a robber on a graph, {\em Discrete Mathematics} \textbf{312}, pp. 3265--3269 (2012).
\bibitem{S2} S.~Seager, Locating a backtracking robber on a tree, {\em Theoretical Computer Science} \textbf{539}, pp. 28--37 (2014).

\end{thebibliography}
\end{document}